\titleformat{\section}[hang]
{\normalfont\Large\bfseries}
{\thesection.}{0.5em}{}
\titlespacing*{\section}{0pc}{2pc}{0.25pc}
\titleformat{\subsection}[runin]
{\normalfont\large\bfseries}
{\thesubsection}{0.5em}{}
\titlespacing{\subsection}{0pc}{1.5pc}{0.5pc}
\newcommand{\N}{\mathbb{N}}
\newcommand{\C}{\mathbb{C}}
\newcommand{\<}{\left\langle}
\renewcommand{\>}{\right\rangle}
\newcommand{\dom}{\operatorname{dom}}
\newcommand{\cross}{\rotatebox[origin=c]{180}{\textnormal{\tiny\dag}}}
\newcommand{\E}{\mathcal{E}}
\newcommand{\co}{\text{co}}
\newcommand{\U}{\mathcal{U}}
\newtheorem{thm}{Theorem}
\newtheorem{prop}[thm]{Proposition}
\newtheorem{lem}[thm]{Lemma}
\newtheorem{cor}[thm]{Corollary}
\theoremstyle{definition}
\newtheorem{rem}[thm]{Remark}
\title{\textbf{On the genericity of irreducible subfactors}}
\author{Yoonkyeong Lee$^\circ$}
\address{$^\circ$Department of Mathematics, Michigan State University\hfill \url{leeyoo16@msu.edu}}
\author{Brent Nelson$^\bullet$}
\address{$^\bullet$Department of Mathematics, Michigan State University \hfill \url{brent@math.msu.edu}}
\date{}
\begin{document}


\begin{abstract}
We show that finitely generated irreducible $\mathrm{II}_1$ subfactors are generic in the following sense. Given a separable $\mathrm{II}_1$ factor $M$ and an integer $n\geq 2$, equip the set of $n$-tuples of self-adjoint operators in $M$ with norm at most $1$ with the metric $d(x,y) = \max_{1\leq i \leq n} \|x_i - y_i\|_2$. Then the set of $n$-tuples that generate an irreducible subfactor of $M$ forms a dense $G_\delta$ set in this metric space. On the way to proving this result, we show that closable derivations vanish on the anticoarse space associated to their kernels, which leads to new applications of conjugate systems in free probability.
\end{abstract}

\maketitle

\section*{Introduction}

Recall that a unital inclusion of von Neumann algebras $N\leq M$ is said to be \emph{irreducible} if $N'\cap M=\C$. In this case, both $N$ and $M$ (as well as any intermediate von Neumann algebra $N\leq P\leq M$) are necessarily factors. Irreducibility can be interpreted as saying that $N$ is influential to the structure of $M$; for example, it is equivalent to saying that $L^2(M)$ has no non-trivial $N$-$M$-subbimodules. Our main result shows that finite tuples generating irreducible subalgebras are generic.

\begin{thm}\label{thm}
Let $M$ be a separable $\mathrm{II}_1$ factor. For any integer $n\geq 2$, equip $(M_{s.a.})_1^{\oplus n}$ with the metric
    \[
        d(x,y) = \max_{1\leq i \leq n} \| x_i - y_i \|_2.
    \]
Then $\{x\in (M_{s.a.})_1^{\oplus n} \colon W^*(x)\leq M \text{ is irreducible}\}$ is a dense $G_\delta$ set in this metric space.
\end{thm}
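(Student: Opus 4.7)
The plan is to prove the $G_\delta$ and density parts separately.

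For the $G_\delta$ part, fix a countable $\|\cdot\|_2$-dense subset $\{z_k\}$ of $\{z \in (M_{s.a.})_1 : \tau(z) = 0\}$, and set $C(x) := W^*(x)' \cap M$. Then $W^*(x) \leq M$ is reducible iff $\|E_{C(x)}(z_k)\|_2 > 0$ for some $k$, since any non-scalar self-adjoint element of $C(x)$, after centering and normalizing, is approximable by the $z_k$. Thus the reducible set equals $\bigcup_{k,j \in \N} B_{k,j}$ with $B_{k,j} := \{x : \|E_{C(x)}(z_k)\|_2 \geq 1/j\}$. I claim $\overline{B_{k,j}} \subseteq B_{k,j^2}$, making this union $F_\sigma$. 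If $x^{(m)} \to x$ in $d$ with $x^{(m)} \in B_{k,j}$, then by Banach--Alaoglu the bounded sequence $y_m := E_{C(x^{(m)})}(z_k) \in (M_{s.a.})_1$ has a $\sigma$-weak cluster point $y$; passing to the limit in $[y_m, x^{(m)}_i] = 0$ gives $y \in C(x)$, while $\langle y_m, z_k \rangle = \|y_m\|_2^2 \geq 1/j^2$ passes to $\langle y, z_k \rangle = \langle y, E_{C(x)}(z_k) \rangle \geq 1/j^2$, so Cauchy--Schwarz with $\|y\|_2 \leq 1$ yields $\|E_{C(x)}(z_k)\|_2 \geq 1/j^2$. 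Since $\bigcup_j B_{k,j} = \bigcup_j \overline{B_{k,j}}$, the reducible set is $F_\sigma$ and irreducibility is $G_\delta$.

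For density, the plan is to apply the paper's main lemma on closable derivations vanishing on anticoarse spaces. Given $x$ and $\epsilon > 0$, I aim to produce a nearby $x' \in (M_{s.a.})_1^{\oplus n}$ such that $W^*(x') \leq M$ admits a closable derivation $\delta$ into an $M$-subbimodule of $L^2(M) \bar\otimes L^2(M)$, with $\ker \delta \supseteq W^*(x')$ and $\delta(x'_i)$ realizing a conjugate system for $x'$ in the sense of Voiculescu. The natural construction is a free semicircular perturbation: pass to an enlarged algebra $\tilde M$ (such as $M * L(F_n)$) containing a free semicircular family $s$ free from $M$, set $\tilde x := \sqrt{1-\epsilon^2}\, x + \epsilon\, s$, and exploit the standard conjugate system for $\tilde x$. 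Then any $y \in W^*(x')' \cap M$ in the domain of $\delta$ satisfies $a\delta(y) = \delta(ay) = \delta(ya) = \delta(y)a$ for $a \in W^*(x')$, so $\delta(y)$ is a $W^*(x')$-central vector and hence lies in the anticoarse $W^*(x')$-subbimodule of $L^2(M) \bar\otimes L^2(M)$. The main lemma forces $\delta(y) = 0$, placing $y$ into $\ker\delta$ and therefore into $Z(W^*(x'))$. Choosing $x'$ so that $W^*(x')$ is a factor (using finite free Fisher information and Dabrowski's factoriality theorem) concludes $W^*(x')' \cap M = \C$.

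The principal obstacle is the transfer from $\tilde M$ back to $M$: genuine free independence is not available in an arbitrary separable $\mathrm{II}_1$ factor, so the semicircular perturbation cannot a priori be realized inside $M$. This is precisely where the paper's derivation-theoretic input is essential --- the anticoarse-vanishing lemma substitutes for strict freeness by permitting the argument to run with a closable derivation into the coarse $M$-bimodule $L^2(M) \bar\otimes L^2(M)$ whose image vectors need not come from freely independent elements. Making this bridge rigorous, so that the perturbation lives inside $M$ itself while preserving enough of the conjugate system structure to feed into the main lemma, is the technical heart of the density half of the argument.
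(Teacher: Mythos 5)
Your $G_\delta$ half is correct and is a genuinely different argument from the paper's. The paper does not separate the two halves: it reformulates irreducibility via an approximate non-commutation criterion (for all $a$ in a countable dense subset of $M_{s.a.}$ and all $m$, there is a contraction $p\in W^*(x)$ with $\|a-\tau(a)\|_2 < \|[a,p]\|_2+\tfrac1m$) and writes the irreducible set as $\bigcap_{k,m}G_{k,m}$ with each $G_{k,m}$ open, the openness coming from approximating the witness $p$ by a $*$-polynomial in $x$ with uniform operator-norm control. Your route instead exhibits the reducible set as an $F_\sigma$ by a compactness argument on the conditional expectations $E_{W^*(x)'\cap M}(z_k)$; the limit-passing in $[y_m,x_i^{(m)}]=0$ and the lower bound $\langle y,z_k\rangle\geq 1/j^2$ both check out, so $\overline{B_{k,j}}\subseteq B_{k,j^2}$ and the claim follows. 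This is a clean, self-contained alternative; what the paper's formulation buys is that the \emph{same} open sets $G_{k,m}$ are then shown to be dense, so Baire does all the remaining work at once.

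The density half, however, has a genuine gap, which you yourself flag: the free semicircular perturbation $\tilde x=\sqrt{1-\epsilon^2}\,x+\epsilon s$ lives in $M*L(F_n)$, not in $M$, and you do not supply the mechanism for returning to $M$. The anticoarse-vanishing proposition does not perform this transfer --- it only yields irreducibility of $W^*(\tilde x)$ inside the \emph{enlarged} algebra, which says nothing about any tuple of $M$. The paper's resolution has two ingredients you are missing. First, the perturbation is carried out in the ultrapower $M^{\mathcal{U}}$, where Popa's theorem \cite[Theorem 0.1]{Pop14} provides a semicircular family free from any prescribed countable subset of $M$; crucially these semicirculars admit lifts to sequences in $(M_{s.a.})_1$. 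Second, one never tries to transfer irreducibility itself. Instead one perturbs the $(n+1)$-tuple $(a_k,x_1,\ldots,x_n)$, applies Corollary~\ref{cor} to get irreducibility of $W^*(x_1(t),\ldots,x_n(t))$ inside $W^*(a_k+ts_0,x_1(t),\ldots,x_n(t))$, and extracts from Lemma~\ref{lem} only the \emph{finitary, open} consequence $\|a_k-\tau(a_k)\|_2<\|[a_k,q(x_1(t),\ldots,x_n(t))]\|_2+\tfrac1m$ for a single norm-controlled $*$-polynomial $q$. That strict inequality of $\|\cdot\|_2$-quantities survives evaluation along the lift at some index $\ell$, producing an honest tuple in $(M_{s.a.})_1^{\oplus n}$ (after rescaling by $(1+t)^{-1}$) that lies in $G_{k,m}$ and is $d$-close to $x$. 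Without this reduction to one open condition at a time, there is no way to pull the conclusion back into $M$, and your argument does not close. (A secondary soft spot: elements of $W^*(x')'\cap M$ need not lie in $\dom(\delta)$, so your computation $a\delta(y)=\delta(y)a$ is not directly available; the paper handles this through the normalizer/anticoarse-space absorption in Corollary~\ref{cor} rather than by differentiating the relative commutant.)
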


\noindent Here and throughout we denote $M_{s.a.}:=\{x\in M\colon x=x^*\}$, $(M)_1:=\{x\in M\colon \|x\|\leq 1\}$, and $(M_{s.a.})_1:= M_{s.a.}\cap (M)_1$. The above theorem answers a question posed by S. Kunnawalkam Elayavalli, who along with coauthors D. Gao, G. Patchell, and H. Tan showed in \cite[Proposition 3.29]{GKEPT24} that finite tuples in a $\mathrm{II}_1$ factor generically generate a subfactor. Our proof (which can be found in Section~\ref{sec:proof_of_thm}) is essentially a refinement of theirs, where density is achieved through a Baire category argument. But a crucial difference is in the use of the following technical result.

\begin{prop}\label{prop}
Let $(M,\tau)$ be a tracial von Neumann algebra and suppose
    \[
        \delta\colon \dom(\delta)\to [L^2(M,\tau)\otimes L^2(M,\tau)]^{\oplus \infty}
    \]
is a derivation whose domain $\dom(\delta)\subset M$ is a unital $*$-subalgebra. Let $Q,N\leq M$ be the von Neumann subalgebras generated by $\ker{\delta}\cap \ker{\delta^*}$ and $\dom(\delta)$, respectively. If $\delta$ is closable as a densely defined operator on $L^2(N,\tau)$, then the anticoarse space for $Q\leq N$ satisfies
    \[
        L^2_{\cross}(Q\leq N, \tau) \subset \ker{\bar{\delta}}
    \]
where $\bar{\delta}$ denotes the closure of $\delta$.
\end{prop}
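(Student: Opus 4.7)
The plan is to combine the closed-derivation structure of $\bar\delta$ with a duality argument against $\bar\delta^*$: first show $Q\subset\ker\bar\delta$ directly, then upgrade to $Q$-bimodularity of $\bar\delta^*$ using the second kernel condition, and finally conclude by orthogonality.

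\emph{Step 1 (Vanishing on $Q$).} Since $\bar\delta$ is a closed derivation, $\ker\bar\delta\cap N$ is a $*$-subalgebra of $N$, closed on bounded sets in the weak operator topology thanks to closability on $L^2(N)$. The generating set $\ker\delta\cap\ker\delta^*$ is contained in $\ker\delta\subset\ker\bar\delta$, hence $Q\subset\ker\bar\delta$; in particular $L^2(Q)\subset\ker\bar\delta$.

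\emph{Step 2 ($Q$-bimodularity of $\bar\delta^*$).} For $q$ in the generating set, one shows that left and right multiplication by $q$ preserve $\dom(\bar\delta^*)$ with $\bar\delta^*(q\eta)=q\bar\delta^*(\eta)$ and $\bar\delta^*(\eta q)=\bar\delta^*(\eta)q$. The core identity, for $x\in\dom(\delta)$ and $\eta\in\dom(\bar\delta^*)$, is
\[
\langle q\eta,\delta(x)\rangle=\langle\eta,q^*\delta(x)\rangle=\langle\eta,\delta(q^*x)\rangle=\langle\bar\delta^*(\eta),q^*x\rangle=\langle q\bar\delta^*(\eta),x\rangle,
\]
using the Leibniz rule together with $\delta(q^*)=0$ in the second equality. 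The role of the additional condition $\delta^*(q)=0$ is to ensure that the analogous right-multiplication computation produces no residual adjoint term, so that the bimodularity is genuine; iterating along the $*$-algebra generated by the $q$'s and closing, one gets that $Q$ and $Q^{\mathrm{op}}$ commute with $\bar\delta^*$.

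\emph{Step 3 (Duality).} From general operator theory, $\ker\bar\delta=\operatorname{range}(\bar\delta^*)^\perp$. By its definition via the $Q$-$Q$-bimodule structure on $L^2(N)$, the anticoarse space $L^2_\cross(Q\leq N)$ is the closed span of vectors of the form $a\zeta b$ with $a,b\in Q$ and $\zeta\in L^2(Q)$. For $\eta\in\dom(\bar\delta^*)$, Step 2 gives
\[
\langle a\zeta b,\bar\delta^*(\eta)\rangle=\langle\zeta,\bar\delta^*(a^*\eta b^*)\rangle,
\]
which vanishes because $\zeta\in L^2(Q)\subset\ker\bar\delta$ by Step 1. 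Hence $L^2_\cross(Q\leq N)\perp\operatorname{range}(\bar\delta^*)$, and the conclusion follows.

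\emph{Main obstacle.} Step 2 is the crux. The precise interpretation of ``$q\in\ker\delta^*$'' (since $\delta^*$ naturally lives on $[L^2(M)\otimes L^2(M)]^{\oplus\infty}$ rather than $M$) must be unpacked, and leveraging it to secure full $Q$-bimodularity of $\bar\delta^*$ requires careful manipulation of domains, especially since closability is hypothesized only on $L^2(N)$ rather than the ambient $L^2(M)$. A secondary point is justifying the concrete description of $L^2_\cross(Q\leq N)$ invoked in Step 3.
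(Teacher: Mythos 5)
The decisive problem is in Step 3: your description of the anticoarse space is false, and the conclusion collapses with it. The closed span of the vectors $a\zeta b$ with $a,b\in Q$ and $\zeta\in L^2(Q,\tau)$ is just $L^2(Q,\tau)$ itself, whereas $L^2_{\cross}(Q\leq N,\tau)$ is by definition the intersection of the kernels of all \emph{bounded} $Q$-bimodular maps from $L^2(N,\tau)$ into copies of the coarse bimodule $L^2(Q,\tau)\otimes L^2(Q,\tau)$; it is in general much larger than $L^2(Q,\tau)$ (if it weren't, the proposition would be an immediate consequence of your Step 1 and would have no content). As written, your Steps 2 and 3 only establish that $L^2(Q,\tau)\perp \operatorname{ran}(\bar\delta^*)$, i.e.\ they re-derive $L^2(Q,\tau)\subset\ker\bar\delta$, and nothing in the argument ever engages with the actual anticoarse space.

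The missing idea is mollification. Because the anticoarse space is cut out by bounded $Q$-bimodular maps, one must manufacture such maps from the unbounded operator $\bar\delta$: the paper sets $\Delta=\delta^*\bar\delta$ and $\zeta_\alpha=(\alpha/(\alpha+\Delta))^{1/2}$, checks that $\zeta_\alpha$ is $Q$-bilinear (a computation in the same spirit as your Step 2) and that $T_\alpha:=\bar\delta\circ\zeta_\alpha$ is bounded, and observes that the target $[L^2(M,\tau)\otimes L^2(M,\tau)]^{\oplus\infty}$ is a coarse $Q$-bimodule because $L^2(M,\tau)$ embeds into $L^2(Q,\tau)^{\oplus\infty}$ as a one-sided $Q$-module. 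Hence $T_\alpha$ vanishes on $L^2_{\cross}(Q\leq N,\tau)$ by definition, and letting $\alpha\to\infty$ together with the closedness of $\bar\delta$ gives the claim. Your duality framework could in principle be repaired--one would need $\overline{\operatorname{ran}(\bar\delta^*)}$ to be a coarse-embeddable $Q$-subbimodule of $L^2(N,\tau)$--but establishing that again requires composing $\bar\delta^*$ with a resolvent, i.e.\ the same mollification. Two smaller remarks: your Steps 1 and 2 are essentially sound, and the condition you should use is that $q$ and $q^*$ both lie in $\ker\delta$ (the left-multiplication identity needs $\delta(q^*)=0$ and the right-multiplication identity needs $\delta(q)=0$); this is exactly how the paper reads $\ker\delta\cap\ker\delta^*$, so no condition on the Hilbert-space adjoint of $\delta$ is involved.
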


\noindent The anticoarse space in the above proposition is defined as
    \[
        L^2_{\cross}(Q\leq N, \tau):= \bigcap \left\{ \ker{T}\colon T\in \text{Hom}_{Q-Q}(L^2(N,\tau), L^2(Q,\tau)\otimes L^2(Q,\tau)\right\},
    \]
and forms a closed $Q$-subbimodule of $L^2(N,\tau)$ (see \cite[Definition 1.1]{Hay18} and \cite[Section 2.1]{HJKE24}). 
The utility of Proposition~\ref{prop} comes from the absorption properties of this anticoarse space, and relatively mild assumptions about the inclusion $Q\leq N$ can even force $L^2_{\cross}(Q\leq N,\tau)= L^2(N,\tau)$ so that $\bar\delta \equiv 0$. This holds, for example, if $\dim_Q L^2(N,\tau) < \infty$ (see \cite[Proposition 2.2]{HJKE24}), or if $Q$ is diffuse and the inclusion is regular in the sense that $\mathcal{N}_N(Q)''=N$ (see \cite[Proposition 1.2]{Hay18}). To put it another way, the proposition tells us that $\delta$ being non-zero precludes the inclusion from having certain properties. The proof of Proposition~\ref{prop} (which can be found in Section~\ref{sec:proof_of_prop}) relies on a standard technique that mollifies the derivation into a bounded operator that is bilinear over its kernel (see, for example, \cite{Pet09,Dab10,Nel17,Lee24}).

The anticoarse space has been a vital tool in applications of $1$-bounded entropy to the structure of von Neumann algebras (see, for example, \cite{Hay18, HJKE24}), and the previous proposition appears to open the door to similar applications in the non-microstates setting. For example, if a tuple $(x_i)_{i\in I}$ of operators admits a conjugate system, then the corresponding free difference quotients are closable by \cite[Corollary 4.2]{Voi98} and have diffuse kernels by \cite[Proposition 8.18]{MS17}. In Corollary~\ref{cor}, these facts are leveraged via Proposition~\ref{prop} in order to obtain structural properties of the inclusion $W^*(x_j\colon j\in J)\leq W^*(x_i\colon i\in I)$ when $J\subsetneq I$ is a non-empty proper subset. In particular, the inclusion is irreducible provided $|J|\geq 2$, and this is ultimately what is used in the proof of Theorem~\ref{thm}. It should also be noted that an important application of Corollary~\ref{cor} is to finite $q$-deformed semicircular families, which are known to admit conjugate systems by \cite{MS23}.

\subsection*{Acknowledgments}

We thank Srivatsav Kunnawalkam Elayavalli for several fruitful discusssions as well as suggesting the question in the first place; we thank Ben Hayes for providing helpful comparisons with microstates free entropy theory; and we thank David Jekel for his extensive feedback on the article as well the use of his observation in Remark~\ref{rem}.  Both authors were supported by NSF grant DMS-2247047.

\section{Proof of Proposition~\ref{prop}}\label{sec:proof_of_prop}

Define $\delta^\sharp(x) := \delta(x^*)^\dagger$ for $x\in \dom(\delta)$, where $\dagger$ is the conjugate linear isometry on $[L^2(M,\tau)\otimes L^2(M,\tau)]^{\oplus\infty}$ defined entrywise by $\xi\otimes \eta\mapsto (J_\tau \eta) \otimes (J_\tau\xi)$. Then
    \[
        \text{Re}(\delta):= \frac12(\delta+ \delta^\sharp) \qquad \text{ and } \qquad \text{Im}(\delta):= \frac{1}{2i}(\delta - \delta^\sharp)
    \]
are \emph{real derivations} in the sense that $\text{Re}(\delta)(x^*) = \text{Re}(\delta)(x)^\dagger$ and similarly for $\text{Im}(\delta)$. These derivations also satisfy $\delta = \text{Re}(\delta)+ i \text{Im}(\delta)$ and
    \[
        \ker{\delta} \cap \ker{\delta}^* = \ker{\text{Re}(\delta)} \cap \ker{\text{Im}(\delta)}.
    \]
So replacing $\delta$ with $\text{Re}(\delta)\oplus \text{Im}(\delta)$, we may assume $\delta$ is a real derivation with $\ker{\delta}\cap \ker{\delta}^*=\ker{\delta}$. Set $Q=\ker{\delta}''$ and note that $Q\subset \ker{\bar{\delta}}$. Also set $N:=\dom(\delta)''$.

Consider the unbounded operator $\Delta=\delta^*\bar\delta$, and for each $\alpha>0$ define $\zeta_\alpha\in B(L^2(N,\tau))$ by
    \[
        \zeta_\alpha:= (\frac{\alpha}{\alpha+\Delta})^{1/2},
    \]
which we note satisfies $\text{ran}(\zeta_\alpha)\subset \dom(\bar\delta)$ (see \cite[Section 2]{Pet09}). We also have that $\zeta_\alpha \to 1$ in the strong operator topology as $\alpha\to\infty$. To see this, it suffices by \cite[Theorem \rm{II}.4.7]{Tak02} to show $\zeta_\alpha^2\to 1$ in the strong operator topology as $\alpha\to \infty$, and this is clear since $\zeta_\alpha^2(\xi)-\xi=\frac{\alpha}{\alpha+\Delta}(\xi)-\xi=\frac{\Delta}{\alpha+\Delta}(\xi)\to 0$ as $\alpha\to\infty$. Additionally, we claim that $\zeta_\alpha$ is $Q$-bilinear. Recall that $\bar{\delta}$ restricts to a derivation on $N\cap\dom(\bar{\delta})$ (see \cite{DL92} and \cite[Section 2]{Pet09}), and so for $x\in Q\subset \ker{\bar\delta}$, $\xi\in \dom(\Delta)$, and $y\in \dom(\delta)$ one has
    \begin{align*}
        \< \Delta(x\xi),y\>_{\tau} &=\<\bar{\delta}(x\xi),\delta(y)\>_{\tau\otimes \tau}=\<x\bar{\delta}(\xi),\delta(y)\>_{\tau\otimes \tau} \\
        &=\< \bar{\delta}(\xi),x^*\delta(y)\>_{\tau\otimes \tau} =\<\bar{\delta}(\xi),\bar{\delta}(x^*y)\>_{\tau\otimes\tau} = \<\Delta(\xi), x^* y\>_{\tau}=\<x\Delta(\xi),y\>_{\tau}.
    \end{align*}
Thus $\Delta(x\xi)=x\Delta(\xi)$. Now for $\xi\in L^2(N,\tau)$, denote  $f:=\zeta_\alpha^2(x \xi)=\frac{\alpha}{\alpha+\Delta}(x \xi)$ and $g:=\zeta_\alpha^2(\xi)=\frac{\alpha}{\alpha+\Delta}(\xi)$. Then for $x\in Q$ one has
    \begin{align*}
        \frac{1}{\alpha}(\alpha+\Delta)(f-x g)&=x \xi-x g- \frac{1}{\alpha}\Delta(x g)\\
        &=x \xi-x g-x\frac{1}{\alpha}\Delta(g)=x \xi-x \left(1+\frac{\Delta}{\alpha}\right)(g)=x \xi-x \xi=0.
    \end{align*}
Thus $\zeta_\alpha^2(x \xi)=x \zeta_\alpha^2(\xi)$ and the functional calculus implies $\zeta_\alpha(x \xi)=x\zeta_\alpha(\xi)$. Similarly, $\zeta_\alpha(\xi x )=\zeta_\alpha(\xi)x$.

Now, for each $\alpha>0$, the above discussion implies
    \[
        T_\alpha:= \bar{\delta}\circ \zeta_\alpha
    \]
is a bounded, $Q$-bilinear map. Since $L^2(M,\tau)$ embeds into $L^2(Q,\tau)^{\oplus\infty}$ as either a left or right $Q$-module (though not necessarily as a $Q$-bimodule), we may view $T_\alpha$ as a bounded, $Q$-bilinear map from $L^2(N,\tau)$ to $[L^2(Q,\tau)\otimes L^2(Q,\tau)]^{\oplus\infty}$. Thus $T_\alpha(\xi)=0$ for all $\xi \in L^2_\dagger(Q\leq N,\tau)$, and since $\zeta_\alpha(\xi)\to \xi$ as $\alpha\to \infty$ it follows that $\xi\in \dom(\bar{\delta})$ with $\bar{\delta}(\xi) =0$.$\hfill\blacksquare$\\

We conclude this section with an application of Proposition~\ref{prop} that will be needed in the proof of Theorem~\ref{thm}, but which may be of independent interest.

\begin{cor}\label{cor}
Let $(M,\tau)$ be a tracial von Neumann algebra and suppose $(x_i)_{i\in I} \subset M_{s.a.}$ is a tuple admitting a conjugate system. Then for any non-empty proper subset $J\subsetneq I$ the inclusion
    \[
        W^*(x_j\colon j\in J) \leq W^*(x_i\colon i\in I)
    \]
is infinite index and non-regular. If $A\leq W^*(x_i\colon i\in I)$ is abelian with diffuse intersection $A\cap W^*(x_j\colon j\in J)$, then $A$ is not regular. Furthermore, if $|J|\geq 2$, then the inclusion is also irreducible.
\end{cor}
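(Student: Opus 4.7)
The plan is to apply Proposition~\ref{prop} with each individual free difference quotient $\partial_{i_0}$ for $i_0\in I\setminus J$. Since the $x_i$ are self-adjoint and $\partial_{i_0}$ acts on generators by $\partial_{i_0}(x_{i_0})=1\otimes 1$ and $\partial_{i_0}(x_k)=0$ for $k\neq i_0$, each $\partial_{i_0}$ is already a real derivation on $\C\langle x_i\colon i\in I\rangle$, so $\ker\partial_{i_0}\cap \ker \partial_{i_0}^\sharp=\ker\partial_{i_0}$ and $Q_J\leq Q_{i_0}:=(\ker \partial_{i_0})''$. Inspecting the proof of Proposition~\ref{prop} yields the sharper statement that the bounded $Q_{i_0}$-bilinear map $T_\alpha=\bar\partial_{i_0}\circ \zeta_\alpha$ is automatically $Q_0$-bilinear for \emph{every} von Neumann subalgebra $Q_0\leq Q_{i_0}$ (the only property invoked is $Q_0\subset \ker \bar\partial_{i_0}$), so $L^2_\cross(Q_0\leq N)\subset \ker \bar\partial_{i_0}$ for any such $Q_0$. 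From this, infinite index is immediate: $[N\colon Q_J]<\infty$ would force $[N\colon Q_{i_0}]<\infty$, whence $L^2_\cross(Q_{i_0}\leq N)=L^2(N)\subset \ker \bar\partial_{i_0}$ by \cite[Proposition 2.2]{HJKE24}, contradicting $\partial_{i_0}(x_{i_0})=1\otimes 1$. Non-regularity follows identically via \cite[Proposition 1.2]{Hay18}, using that $Q_J$ is diffuse (since $J\neq\emptyset$ and each $x_j$ has no atoms by Voiculescu's conjugate-variable theory).

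For irreducibility when $|J|\geq 2$, take $y\in Q_J'\cap N$ with $y=y^*$. Since $y$ commutes with $Q_J$, any $Q_J$-bimodular $T\colon L^2(N)\to L^2(Q_J)\otimes L^2(Q_J)$ sends $y$ to a $Q_J$-central vector of the coarse bimodule, which vanishes by diffuseness of $Q_J$ (disintegrating over the spectrum, such a vector is supported on the measure-zero diagonal), so $y\in L^2_\cross(Q_J\leq N)\subset \ker\bar\partial_i$ for every $i\in I\setminus J$. For $i\in J$, the hypothesis $|J|\geq 2$ provides some $j'\in J\setminus\{i\}$, and repeating the central-vector argument with $Q_0=W^*(x_{j'})\subset (\ker \partial_i)''$ yields $y\in \ker\bar\partial_i$. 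Hence $\bar\partial_i(y)=0$ for every $i\in I$. Fixing any $i\in J$, the Leibniz rule applied to the identity $yx_i-x_iy=0$ together with $\partial_i(x_i)=1\otimes 1$ and $\bar\partial_i(y)=0$ collapses to $y\otimes 1 - 1\otimes y=0$ in $L^2(N)\otimes L^2(N)$, which (via $\|y\otimes 1-1\otimes y\|_2^2=2\|y-\tau(y)\|_2^2$) forces $y\in\C$.

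The most delicate clause is the last. Set $B:=A\cap Q_J$, diffuse by hypothesis. Abelianness of $A$ makes the left and right $B$-actions on $L^2(A)$ coincide, so any $B$-bimodular $T\colon L^2(N)\to L^2(B)\otimes L^2(B)$ sends $L^2(A)$ into the $B$-central vectors of the coarse bimodule---which vanish for diffuse $B$---giving $L^2(A)\subset L^2_\cross(B\leq N)\subset \ker\bar\partial_{i_0}$. If now $A$ is regular, then for a unitary $u\in \mathcal{N}_N(A)$ the Leibniz rule applied to $uau^*\in A$, together with $\bar\partial_{i_0}|_{L^2(A)}=0$ and the unitary identity $\bar\partial_{i_0}(u^*)=-u^*\bar\partial_{i_0}(u)u^*$, yields $\bar\partial_{i_0}(u)\cdot a=(uau^*)\cdot \bar\partial_{i_0}(u)$ for every $a\in A$. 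Disintegrating $L^2(N)\otimes L^2(N)$ over the spectrum $X\times X$ of $A\,\bar\otimes\,A$, this relation confines $\bar\partial_{i_0}(u)$ to the graph of the measurable automorphism of $X$ dual to $\mathrm{Ad}(u)|_A$; because $A$ is diffuse, the graph has $(\mu\times\mu)$-measure zero and hence $\bar\partial_{i_0}(u)=0$. Then $\bar\partial_{i_0}$ vanishes on $A\cup \mathcal{N}_N(A)$ and hence on $N=\mathcal{N}_N(A)''$ by the derivation rule and closedness, giving the final contradiction with $\partial_{i_0}(x_{i_0})\neq 0$. The main obstacle is the tacit assumption $u\in \dom\bar\partial_{i_0}$ needed for the Leibniz expansion, which is not automatic for $u\in \mathcal{N}_N(A)$; the remedy will be to replace $u$ by its resolvent approximation $\zeta_\alpha(u)$, track the twisted bimodule identity through the $Q_{i_0}$-bilinearity of $\zeta_\alpha$, and pass to the limit.
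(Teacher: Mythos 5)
Your treatment of the infinite-index, non-regularity, and irreducibility clauses is correct and essentially the paper's own argument: in each case you identify a diffuse subalgebra $Q_0$ of $W^*(\ker\partial_i)$ whose anticoarse space absorbs the relevant element (via the absence of nonzero $Q_0$-central vectors in the coarse bimodule, which is the content of \cite[Proposition 1.2]{Hay18}), conclude that the element lies in $\ker\bar\partial_i$, and finish with $z\otimes 1=\bar\partial_j(zx_j)=\bar\partial_j(x_jz)=1\otimes z$. Your observation that $T_\alpha=\bar\partial_i\circ\zeta_\alpha$ is automatically $Q_0$-bilinear for every $Q_0\leq W^*(\ker\partial_i)$ is the same mechanism the paper packages as the inclusion $L^2_{\cross}(Q_0\leq N,\tau)\subset L^2_{\cross}(W^*(\ker\partial_i)\leq N,\tau)$.

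The genuine gap is in the clause about an abelian $A$ with diffuse $A\cap W^*(x_j\colon j\in J)$. Your first step, $L^2(A)\subset L^2_{\cross}(A\cap W^*(x_j\colon j\in J)\leq N,\tau)\subset\ker\bar\partial_{i_0}$, is fine, but the subsequent plan---show $\bar\partial_{i_0}(u)=0$ for every $u\in\mathcal{N}_N(A)$ by expanding $\bar\partial_{i_0}(uau^*)$ and a graph-measure argument---founders on the domain issue you yourself flag: $u$ need not lie in $\dom(\bar\partial_{i_0})$, and the proposed remedy of substituting $\zeta_\alpha(u)$ does not obviously close the gap, since $T_\alpha$ is not a derivation and $\zeta_\alpha(u)$ is neither unitary nor a normalizer of $A$, so the twisted relation $\bar\partial_{i_0}(u)\cdot a=(uau^*)\cdot\bar\partial_{i_0}(u)$ has no evident mollified analogue. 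You are in effect trying to re-derive by hand exactly what the paper gets by citation and a second use of its main proposition: restrict $\bar\partial_{i_0}$ to the derivation $\delta$ on $N\cap\dom(\bar\partial_{i_0})$ (still closable with $\bar\delta=\bar\partial_{i_0}$, by \cite{DL92} and \cite[Section 2]{Pet09}); since $A\subset\ker\delta$ by your first step, Proposition~\ref{prop} applied to $\delta$ gives $L^2_{\cross}(A\leq N,\tau)\subset L^2_{\cross}(W^*(\ker\delta)\leq N,\tau)\subset\ker\bar\partial_{i_0}$; and then \cite[Proposition 1.2]{Hay18} (whose proof already contains the normalizer-absorption argument you are attempting) shows that regularity of the diffuse algebra $A$ would force $L^2_{\cross}(A\leq N,\tau)=L^2(N,\tau)$, contradicting $\partial_{i_0}\neq 0$. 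The missing idea is thus to enlarge the kernel by passing to the restricted derivation and reapply Proposition~\ref{prop}, rather than to redo the normalizer computation directly on $\bar\partial_{i_0}$.
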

\begin{proof}
Denote $N:=W^*(x_i\colon i\in I)$ and  $Q:=W^*(x_j\colon j\in J)$. For each $i\in I$, let
    \[
        \partial_i\colon \C\<x_i\colon i\in I\> \to L^2(M,\tau)\otimes L^2(M,\tau)
    \]
be the \emph{free difference quotient}: the derivation defined by $\partial_i(x_j) = \delta_{i=j} 1\otimes 1$. The free difference quotients are real derivations with $\ker{\partial_i}\supset \C\<x_k\colon k\in I\setminus\{i\}\>$, and the existence of a conjugate system for $(x_i)_{i\in I}$ asserts that each $\partial_i$ is closable (see \cite[Corollary 4.2]{Voi98}). Thus for $i\in I\setminus J$ we have
    \[
        L^2_{\cross}(Q\leq N, \tau) \subset L^2_{\cross}( W^*(\ker{\partial_i}) \leq N, \tau) \subset \ker{\bar\partial_i}
    \]
by Proposition~\ref{prop}, where the first inclusion follows from the fact that $L^2(W^*(\ker{\partial_i}),\tau)\otimes L^2(W^*(\ker{\partial_i}),\tau)$ can be embedded as a $Q$-bimodule in $[L^2(Q,\tau)\otimes L^2(Q,\tau)]^{\oplus \infty}$. Since $\partial_i$ is non-zero, it follows from \cite[Proposition 2.2]{HJKE24} that the inclusion $Q\leq N$ has infinite index, and since $Q$ is diffuse by \cite[Proposition 8.18]{MS17} we further have that the inclusion is non-regular by \cite[Proposition 1.2]{Hay18}.

Now, suppose $A\leq N$ is abelian with $A\cap W^*(x_j\colon j\in J)$ diffuse. Noting that $A \subset \mathcal{N}_N(A\cap W^*(x_j\colon j\in J))''$, it follows from \cite[Proposition 1.2]{Hay18} that
    \[
        A \subset L^2_{\cross}(A\cap W^*(x_j\colon j\in J) \leq N,\tau) \subset L^2_{\cross}( W^*(\ker{\partial_i}) \leq N, \tau) \subset \ker{\bar\partial_i}.
    \]
Let $\delta$ be the restriction of $\bar\partial_i$ to $N\cap \dom(\bar\partial_i)$, which is still a closable derivation with $\bar\delta = \bar\partial_i$ (see \cite{DL92} and \cite[Section 2]{Pet09}) and $A\subset \ker{\delta}$. Applying Proposition~\ref{prop} to $\delta$ we obtain
    \[
        L^2_{\cross}(A\leq N,\tau) \leq L^2_{\cross}( W^*(\ker{\delta}) \leq N,\tau) \subset \ker{\delta}.
    \]
Noting that $A$ is diffuse because it contains the diffuse algebra $A\cap W^*(x_j\colon j\in J)$, we see that the regularity of $A$ would contradict $\partial_i$ being non-zero by \cite[Proposition 1.2]{Hay18}.

For irreducibility of $Q\leq N$, we more or less argue as in \cite[Remark 3]{Dab10}. Assume $|J|\geq 2$ and fix distinct $j,k\in J$. Fixing $z\in Q'\cap N$, we have that $B:=W^*(x_k)$ is diffuse and lies in $Q$. Since $z\in B'\cap N\subset \mathcal{N}_N(B)''$, \cite[Proposition 1.2]{Hay18} and Proposition~\ref{prop} gives us
    \[
        z\in L^2_{\cross}(B\leq N,\tau) \subset L^2_{\cross}(W^*(\ker{\partial_j})\leq N,\tau) \subset \ker{\bar\partial_j},
    \]
where the first inclusion is argued as above. But then
    \[
        z\otimes 1 = \bar\partial_j(zx_j) = \bar\partial_j(x_jz) = 1\otimes z,
    \]
and so applying $1\otimes \tau$ to the above gives $z=\tau(z)\in \C$. Hence $Q\leq N$ is irreducible.
\end{proof}

\section{Proof of Theorem~\ref{thm}}\label{sec:proof_of_thm}

The following lemma is likely well-known to experts. 

\begin{lem}\label{lem}
For a tracial von Neumann algebra $(M,\tau)$ with von Neumann subalgebra $N$, the following are equivalent:
    \begin{enumerate}[label=(\roman*)]
        \item $N\leq M$ is irreducible;
        \item for all $x\in M_{s.a.}$ and $\epsilon>0$ there exists a unitary $u\in N$ so that
            \[
                \| x - \tau(x) \|_2 < \| [x,u] \|_2 + \epsilon;
            \]

        \item for all $x\in M_{s.a.}$ and $\epsilon>0$ there exists $p\in (N)_1$ so that
            \[
                \| x- \tau(x) \|_2 < \| [x,p]\|_2 + \epsilon.
            \]
    \end{enumerate}
\end{lem}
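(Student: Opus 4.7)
The plan is to prove the cycle $(\mathrm{ii}) \Rightarrow (\mathrm{iii}) \Rightarrow (\mathrm{i}) \Rightarrow (\mathrm{ii})$. The implication $(\mathrm{ii}) \Rightarrow (\mathrm{iii})$ is immediate since unitaries lie in $(N)_1$, so any witness from $(\mathrm{ii})$ serves as a witness for $(\mathrm{iii})$.

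For $(\mathrm{iii}) \Rightarrow (\mathrm{i})$, I would take an arbitrary $z \in N' \cap M$ and decompose it into its self-adjoint real and imaginary parts, both of which again lie in $N' \cap M$. For a self-adjoint $z \in N' \cap M$ one has $[z, p] = 0$ for every $p \in N \supset (N)_1$, so $(\mathrm{iii})$ forces $\|z - \tau(z)\|_2 < \epsilon$ for every $\epsilon > 0$, giving $z = \tau(z) \in \C$. Hence $N' \cap M = \C$.

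The main input for $(\mathrm{i}) \Rightarrow (\mathrm{ii})$ is a tracial $L^2$-Dixmier averaging fact: for any $x \in M_{s.a.}$, the $\|\cdot\|_2$-closed convex hull $K \subset L^2(M, \tau)$ of $\{u x u^* \colon u \in \U(N)\}$ contains $E_{N' \cap M}(x)$. Indeed, operator-norm boundedness places $K$ inside $M$, and the unique $\|\cdot\|_2$-nearest point $x_0 \in K$ to $0$ must lie in $N' \cap M$, since $u K u^* = K$ and $\|u x_0 u^*\|_2 = \|x_0\|_2$ for every $u \in \U(N)$, so uniqueness forces $u x_0 u^* = x_0$. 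A direct traciality computation then gives $\<k, z\>_\tau = \<x, z\>_\tau$ for every $k \in K$ and every bounded $z \in N' \cap M$, identifying $x_0$ with $E_{N' \cap M}(x)$. Under irreducibility this point equals $\tau(x)$, so there exist unitaries $u_1, \ldots, u_n \in N$ and weights $t_1, \ldots, t_n \geq 0$ summing to $1$ with
\[
    \Bigl\| \sum_{k=1}^n t_k u_k x u_k^* - \tau(x) \Bigr\|_2 < \epsilon.
\]
Combining this with convexity and the identity $\|u_k x u_k^* - x\|_2 = \|[x, u_k]\|_2$ (which uses that each $u_k$ is unitary), the triangle inequality gives
\[
    \|x - \tau(x)\|_2 \leq \Bigl\| x - \sum_{k=1}^n t_k u_k x u_k^* \Bigr\|_2 + \epsilon \leq \sum_{k=1}^n t_k \|u_k x u_k^* - x\|_2 + \epsilon \leq \max_k \|[x, u_k]\|_2 + \epsilon,
\]
and taking $u$ to be the $u_k$ attaining the maximum yields $(\mathrm{ii})$.

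The main obstacle is establishing that $\tau(x)$ actually lies in the finite-convex-combination closure, rather than only being approachable by more exotic averages such as weak-$*$ limits. The classical Dixmier averaging argument, transferred to the $L^2$-setting, is exactly what provides this, and once granted the remainder of the implication reduces to the triangle inequality and the basic algebraic identity between commutator and unitary conjugation differences.
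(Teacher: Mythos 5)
Your proposal is correct and follows essentially the same route as the paper: the same cycle of implications, with the key step (i)$\Rightarrow$(ii) resting on the fact that $\tau(x)=E_{N'\cap M}(x)$ lies in the $\|\cdot\|_2$-closed convex hull of $\{uxu^*\colon u\in\U(N)\}$, followed by a triangle-inequality argument to extract a single unitary. The only differences are cosmetic: you supply a self-contained proof of the $L^2$-Dixmier averaging fact (which the paper simply invokes) and argue directly rather than by contradiction via the reverse triangle inequality.
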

\begin{proof}
\textbf{(i)$\Rightarrow$(ii):} Since $N'\cap M=\C$, the conditional expectation $\E_{N'\cap M}$ onto $N'\cap M$ equals $\tau$, and so $\tau(x)=\E_{N'\cap M}(x)\in \overline{\co}^{\|\cdot\|_2}\{uxu^\ast:u\in \U(N)\}$. Given $\epsilon >0$, let $\alpha_1,...,\alpha_n\in (0,1)$ and $u_1,...u_n\in \U(N)$ be such that $\sum_{i=1}^n\alpha_i=1$ and $\|\sum_{i=1}^n \alpha_iu_ixu_i^\ast-\tau(x)\|_2<\epsilon$. Then we claim there exists an $1\leq i \leq n$ such that $\|u_ixu_i^\ast-x\|_2 > \|x-\tau(x)\|_2-\epsilon$. If not, then the reverse triangle inequality gives the contradiction
        \begin{align*}
            \left\| \sum_{i=1}^n \alpha_iu_ixu_i^*-\tau(x) \right\|_2&\geq  \|x-\tau(x)\|_2 - \sum_{i=1}^n\alpha_i\|u_ixu_i^\ast -x\|_2  \\
            & \geq  \|x-\tau(x)\|_2-(\|x-\tau(x)\|_2-\epsilon )=\epsilon.
        \end{align*}  
    Therefore, $ \| x - \tau(x) \|_2 < \| [x,u_i] \|_2 + \epsilon$ for some $1\leq i \leq n$.\\
    
    \noindent\textbf{(ii)$\Rightarrow$(iii):} Take $p=u$.\\

    \noindent\textbf{(iii)$\Rightarrow$(i):} Let $x\in N'\cap M$. Since $\| [x,p]\|_2=0$ and $\epsilon>0$ was arbitrary, we have $\|x-\tau(x)\|_2=0$.
\end{proof}

We now fix a separable $\mathrm{II}_1$ factor $M$ with $\{a_k\in M_{s.a.}\colon k\in \N\}$ a $2$-norm dense subset of $M_{s.a.}$. We also fix an integer $n\geq 2$ and equip $(M_{s.a.})_1^{\oplus n}$ with the metric
    \[
        d( x, y) := \max_{1\leq i \leq n}\| x_i - y_i\|_2.
    \]
Recall that $(M_{s.a.})_1^{\oplus n}$ is complete with respect to this metric (see \cite[Proposition III.5.3]{Tak02}).

For each $k,m\in \N$, define
    \[
        G_{k,m}:=\left\{ x\in (M_{s.a.})_1^{\oplus n}\colon \exists p\in (W^*(x))_1 \text{ such that } \|a_k - \tau(a_k)\|_2 < \|[a_k, p]\|_2 + \frac1m \right\}.
    \]
We first observe that $G_{k,m}$ is open with respect to $d$. Given $x\in G_{k,m}$, let $p\in (W^*(x))_1$ be the element witnessing its inclusion in this set. By \cite[Lemma 2.2]{JP24}, we can then approximate $p$ well enough in $2$-norm by a $*$-polynomial $q(x)\in \C\<x\>$ so that $\| a_k - \tau(a_k)\|_2 < \| [a_k, q(x)]\|_2+\frac1m$ and so that $\|q(y)\|\leq 1$ for any tuple $y\in (M_{s.a.})_1^{\oplus n}$. Since $\|q(x) - q(y)\|_2 \leq C d(x,y)$ for some constant depending only $q$, it then follows $G_{k,m}$ is open.

We next claim that the $G_{k,m}$ are dense in $(M_{s.a.})_1^{\oplus n}$ with respect to $d$. Fix $k,m\in \N$ and $x=(x_1,\ldots, x_n) \in (M_{s.a.})_1^{\oplus n}$, and set $x_0:=a_k$. Let $\mathcal{U}$ be a free ultrafilter on $\N$ and let $(M^{\mathcal{U}},\tau^{\mathcal{U}})$ be the corresponding ultrapower. We iteratively apply \cite[Theorem 0.1]{Pop14} to $\{x_j\colon 0\leq j \leq n\}\subset M\subset M^{\mathcal{U}}$ in order to find a free semicircular family $s_0,s_1,\ldots, s_n\in  M^{\mathcal{U}}$ that is free from $\{x_j\colon 0\leq j\leq n\}$. For each $j=0,1\ldots, n$ and $t>0$, denote
    \[
        x_j(t):= x_j + t s_j,
    \]
and denote $x(t):=(x_0(t),\ldots, x_n(t))$. Then $x(t)$ admits a conjugate system by \cite[Propsotion 3.7]{Voi98} for any $t>0$, and using $n\geq 2$ it follows from Corollary~\ref{cor} that $W^*(x_i(t)\colon 1\leq i \leq n) \leq W^*(x_j(t)\colon 0\leq j\leq n)$ is irreducible. So invoking Lemma~\ref{lem} there exists a unitary $u(t)\in W^*(x_i(t)\colon 1\leq i \leq n)$ so that
    \[
        \| x_0(t) - \tau^{\mathcal{U}}(x_0(t)) \|_2 < \| [x_0(t), u(t)] \|_2 + \frac{1}{2m}.
    \]
Since $\| x_0(t) - a_k \|_2 = t$ and $\tau^{\mathcal{U}}(x_0(t) - a_k) = \tau^{\mathcal{U}}(ts_0)=0$, for $t\leq \frac{1}{6m}$ we have
    \[
        \| a_k - \tau(a_k)\|_2 < \| [a_k, u(t)]\|_2 + \frac1m.
    \]
Using \cite[Lemma 2.2]{JP24} again we can replace $u(t)$ in the above inequality with a $*$-polynomial $q(x_1(t),\ldots, x_n(t))$ satisfying $\|q( y_1,\ldots, y_n)\|\leq 1$ for any $n$-tuple $(y_1,\ldots, y_n)$ with $\|y_i \| \leq 1+t$. Now, for each $i=1,\ldots, n$ lift $s_i\in M^{\mathcal{U}}$ to a sequence $(s_i^{(\ell)})_{\ell\in \N} \subset (M_{s.a.})_1$. It follows that $( x_i + t s_{i}^{(\ell)})_{\ell\in \N}$ is a lift for $x_i(t)$ with $\| x_i + t s_i^{(\ell)}\|\leq 1+t$ for all $\ell\in \N$. Consequently, $\|q(x_1+t s_1^{(\ell)},\ldots,  x_n+t s_n^{(\ell)} )\| \leq 1$ for all $\ell\in \N$ and
    \[
        \lim_{\ell\to\mathcal{U}} \left\| \left[a_k , q(x_1+t s_1^{(\ell)},\ldots,  x_n+t s_n^{(\ell)} ) \right] \right\|_2 = \| \left[a_k, q(x_1(t),\ldots, x_n(t))\right] \|_2  > \| a_k - \tau(a_k)\|_2 - \frac1m.
    \]
Thus there exists $A\in \mathcal{U}$ so that
    \[
        \|a_k - \tau(a_k)\|_2 < \left\| \left[ a_k, q(x_1+t s_1^{(\ell)},\ldots,  x_n+t s_n^{(\ell)} ) \right] \right\|_2 +\frac1m
    \]
for any $\ell\in A$. Fix any such $\ell\in A$ and set $y:= \frac{1}{1+t}(x_1+t s_1^{(\ell)},\ldots, x_n + t s_n^{(\ell)})_{i\in I} \in (M_{s.a.})_1^{\oplus n}$. Then the above implies $y\in G_{k,m}$ with
    \[
        d(x, y) = \max_{1\leq i \leq n} \frac{t}{1+t}\|x_i - s_i^{(\ell)}\|_2 \leq \frac{2t}{1+t}.
    \]
Thus $G_{k,m}$ is dense.

Now, the Baire category theorem implies
    \[
        G:=\bigcap_{k,m\in \N} G_{k,m}
    \]
is a dense $G_\delta$ set in $(M_{s.a.})_1^{\oplus n}$ with respect to $d$, and we claim $W^*(x) \leq M$ is irreducible for all $x\in G$. Indeed, for $a\in M_{s.a.}$ and $\epsilon>0$, let $k,m\in \N$ be such that $\|a - a_k\|_2 \leq \frac{\epsilon}{5}$ and $\frac1m \leq \frac{\epsilon}{5}$. Then $x\in G_{k,m}$ yields $p\in (W^*(x))_1$ so that
   \[
       \| a - \tau(a)\|_2 \leq \| a_k - \tau(a_k)\|_2 + \frac{2\epsilon}{5} < \|[a_k,p]\|_2 + \frac{3\epsilon}{5} \leq \| [a, p] \|_2 + \epsilon.
   \]
Thus $W^*(x)\leq M$ is irreducible by Lemma~\ref{lem}. $\hfill\blacksquare$

\begin{rem}\label{rem}
The above proof also works for countably infinite tuples $(M_{s.a.})_1^{\oplus \N}$ equipped with the metric
    \[
        d(x,y):=\sup_{i\in\N} \|x_i - y_i\|_2.
    \]
However, it was brought to our attention by D. Jekel that in this case one can obtain a stronger result via a shorter proof; namely, the set $\{x\in (M_{s.a.})_1^{\oplus \N}\colon W^*(x) = M\}$ is a dense $G_\delta$ set in this metric space. It is $G_\delta$ since for a $2$-norm dense subset $\{a_k\in M_{s.a.}\colon k\in \N\}$ of $M_{s.a.}$ one has
    \[
        \{x\in (M_{s.a.})_1^{\oplus \N}\colon W^*(x) = M\} = \bigcap_{k,m\in \N} \left\{ x\in (M_{s.a.})_1^{\oplus \N} \colon \exists p\in (W^*(x))_1 \text{ such that } \|p- a_k\|_2 < \frac{1}{m}\right\},
    \]
and the sets indexed by $k,m\in \N$ are open by an argument similar to the one appearing in the proof of Theorem~\ref{thm}. To see the density of this set, fix $x=(x_i)_{i\in \N} \in (M_{s.a.})_1^{\oplus \N}$ and $\epsilon>0$. Apply Theorem~\ref{thm} (or \cite[Proposition 3.29]{GKEPT24}) to $(x_1,x_2)$ in order to find $(y_1,y_2)\in (M_{s.a.})_1^{\oplus 2}$ satisfying $\|x_i - y_i\|_2 < \epsilon$ for $i=1,2$ with $W^*(y_1,y_2)$ a $\mathrm{II}_1$ factor. Then fix a non-zero projection $q\in W^*(y_1,y_2)$ with $\|q\|_2 < \frac{\epsilon}{4}$, and let $\{b_i\in ((qMq)_{s.a.})_1 \colon i\in \N\}$ be a countable generating set for $qMq$. Setting
    \[
        y_i:= (1-q)x_i(1-q) + b_{i-2}
    \]
for each $i\geq 3$, it follows that $\|y_i\| \leq 1$ and
    \[
        \| x_i - y_i \|_2 \leq \| q x_i (1-q) + (1-q) x_i q + q x_i q \|_2 + \| q b_{i-2} q\|_2 \leq 4 \|q\|_2 < \epsilon.
    \]
Setting $y:=(y_i)_{i\in \N} \in (M_{s.a.})_1^{\oplus \N}$, we therefore have $d(x,y) <\epsilon$, and it remains to show $W^*(y)=M$. The factoriality of $W^*(y_1,y_2)$ yields partial isometries $v_1,\ldots, v_\ell\in W^*(y_1,y_2)$ satisfying $v_s ^* v_s \leq q$ and $\sum_{s=1}^\ell v_sv_s^* =1$. Consequently, for any $a\in M$ one has
    \[
        a= \sum_{s,t=1}^\ell v_s v_s^* a v_t v_t^* = \sum_{s,t=1}^\ell v_s q( v_s^* a v_t) q v_t^*.
    \]
Since $q\in W^*(y_1,y_2)\subset W^*(y)$, it follows that $q y_i q = b_{i-2}\in W^*(y)$ for all $i\geq 3$ and therefore $pMp \subset W^*(y)$. Thus the above expression lies in $W^*(y)$ and therefore $W^*(y)=M$.
\end{rem}

\bibliographystyle{amsalpha}
\bibliography{reference}

\end{document}